\newtheorem{theorem}{Theorem}[section]
\newtheorem{remark}{Remark}[section]
\newtheorem{proposition}{Proposition}[section]
\numberwithin{equation}{section}
\begin{document}
	
\title{On the operator Jensen inequality for convex functions}
\author{Mohsen Shah hosseini, Hamid Reza Moradi, and Baharak Moosavi}
\subjclass[2010]{Primary 47A63, Secondary 26A51, 26D15, 26B25, 39B62.}
\keywords{Jensen's inequality, convex functions, self-adjoint operators, positive operators.}

\begin{abstract}
This paper is mainly devoted to studying operator Jensen inequality. More precisely, a new generalization of Jensen inequality and its reverse version for convex (not necessary operator convex) functions have been proved. Several special cases are discussed as well.
\end{abstract}
\maketitle
\pagestyle{myheadings}
\markboth{\centerline {M. Shah hosseini, H. R. Moradi \& B. Moosavi}}
{\centerline {On the operator Jensen inequality for convex functions}}
\bigskip
\bigskip
\section{Introduction}
Let $\mathcal{B}\left( \mathcal{H} \right)$ be the $C^*$--algebra of all bounded linear operators on a Hilbert space $\mathcal{H}$.  As customary, we reserve $m$, $M$ for scalars and ${{\mathbf{1}}_{\mathcal{H}}}$ for the identity operator on $\mathcal{H}$. A self-adjoint operator $A$ is said to be positive (written $A\ge0$) if $\left\langle Ax,x \right\rangle \ge 0$ holds for all $x\in \mathcal{H}$  also an operator $A$ is said to be strictly positive (denoted by $A>0$) if $A$ is positive and invertible. If $A$ and $B$ are self-adjoint, we
write $B\ge A$ in case $B-A\ge0$. The Gelfand map $f\left( t \right)\mapsto f\left( A \right)$ is an isometrical $*$--isomorphism between the ${{C}^{*}}$--algebra $C\left( sp\left( A \right) \right)$ of continuous functions on the spectrum $sp\left( A \right)$ of a selfadjoint operator $A$ and the ${{C}^{*}}$--algebra generated by $A$ and the identity operator ${{\mathbf{1}}_{\mathcal{H}}}$. If $f,g\in C\left( sp\left( A \right) \right)$, then $f\left( t \right)\ge g\left( t \right)$ ($t\in sp\left( A \right)$) implies that $f\left( A \right)\ge g\left( A \right)$.

For $A,B\in \mathcal{B}\left( \mathcal{H} \right)$, $A\oplus B$ is the operator defined on $\mathcal{B}\left( \mathcal{H}\oplus \mathcal{H} \right)$ by $\left( \begin{matrix}
A & 0  \\
0 & B  \\
\end{matrix} \right)$. 
 A linear map $\Phi:\mathcal{B}\left( \mathcal{H} \right)\to \mathcal{B}\left( \mathcal{K} \right)$ is positive if $\Phi \left( A \right)\ge 0$ whenever $A\ge 0$. It's said to be unital if $\Phi \left( {{\mathbf{1}}_{\mathcal{H}}} \right)={{\mathbf{1}}_{\mathcal{K}}}$. A continuous function $f$ defined on the interval $J$
is called an operator convex function if $f\left( \left( 1-v \right)A+vB \right)\le \left( 1-v \right)f\left( A \right)+vf\left( B \right)$ for every $0<v<1$ and for every pair of bounded self-adjoint operators $A$ and $B$ whose spectra are both in $J$.

The well known operator Jensen inequality states (sometimes called the Choi--Davis--Jensen inequality):
\begin{equation}\label{25}
f\left( \Phi \left( A \right) \right)\le \Phi \left( f\left( A \right) \right).
\end{equation}
It holds for every operator convex $f:J\to \mathbb{R}$, self-adjoint operator $A$ with spectra in $J$, and unital positive linear map $\Phi$ \cite{8, 9}.

Hansen et al. \cite{1} gave a general formulation of \eqref{25}. The discrete version of their result reads as follows: If $f:J\to \mathbb{R}$ is an operator convex function, ${{A}_{1}},\ldots ,{{A}_{n}}\in \mathcal{B}\left( \mathcal{H} \right)$ are self-adjoint operators with the spectra in $J$, and  ${{\Phi }_{1}},\ldots ,{{\Phi }_{n}}:\mathcal{B}\left( \mathcal{H} \right)\to \mathcal{B}\left( \mathcal{K} \right)$ are positive linear mappings such that $\sum\nolimits_{i=1}^{n}{{{\Phi }_{i}}\left( {{\mathbf{1}}_{\mathcal{H}}} \right)}={{\mathbf{1}}_{\mathcal{K}}}$, then 
\begin{equation}\label{7}
f\left( \sum\limits_{i=1}^{n}{{{\Phi }_{i}}\left( {{A}_{i}} \right)} \right)\le \sum\limits_{i=1}^{n}{{{\Phi }_{i}}\left( f\left( {{A}_{i}} \right) \right)}.
\end{equation}
Though in the case of convex function the inequality \eqref{7} does not hold in general (see \cite[Remark 2.6]{8}), we have the following estimate \cite[Lemma 2.1]{5}:
\begin{equation}\label{19}
f\left( \left\langle \sum\limits_{i=1}^{n}{{{\Phi }_{i}}\left( {{A}_{i}} \right)}x,x \right\rangle  \right)\le \left\langle \sum\limits_{i=1}^{n}{{{\Phi }_{i}}\left( f\left( {{A}_{i}} \right) \right)}x,x \right\rangle
\end{equation}
for any unit vector $x\in \mathcal{K}$. For recent results treating the Jensen operator inequality, we refer the reader to \cite{10, 6, 11}.

As a converse of \eqref{7}, in \cite{1} (see also \cite{13}), it has been shown that if $f:\left[ m,M \right]\to \mathbb{R}$ is a convex function and ${{A}_{1}},\ldots ,{{A}_{n}}$ are self-adjoint operators with the spectra in $\left[ m,M \right]$, then
\begin{equation}\label{26}
\sum\limits_{i=1}^{n}{{{\Phi }_{i}}\left( f\left( {{A}_{i}} \right) \right)}\le \beta {{\mathbf{1}}_{\mathcal{K}}}+f\left( \sum\limits_{i=1}^{n}{{{\Phi }_{i}}\left( {{A}_{i}} \right)} \right)
\end{equation}
where 
\[\beta =\max \left\{ \frac{f\left( M \right)-f\left( m \right)}{M-m}t+\frac{Mf\left( m \right)-mf\left( M \right)}{M-m}-f\left( t \right):\text{ }m\le t\le M \right\}.\]
A monograph on the reverse of Jensen inequality and its consequences is given by Furuta et al. in \cite{7}.

 In this paper, we prove an inequality of type \eqref{7} without operator convexity assumption.  Furthermore, as we can see in \eqref{26}, the constant $\beta$ is dependent on $m$ and $M$. In this paper, we establish another reverse of operator Jensen inequality by dropping this restriction.
 
\section{Operator Jensen-type inequalities without operator convexity}
Let $f:J\to \mathbb{R}$ be a convex function, $A\in \mathcal{B}\left( \mathcal{H} \right)$ self-adjoint operator with the spectra in $J$, and let $x\in \mathcal{H}$ be a unit vector. Then from \cite{12},
\begin{equation*}
f\left( \left\langle Ax,x \right\rangle  \right)\le \left\langle f\left( A \right)x,x \right\rangle.
\end{equation*}
Replace $A$ with $\Phi \left( A \right)$, where $\Phi :\mathcal{B}\left( \mathcal{H} \right)\to \mathcal{B}\left( \mathcal{K} \right)$ is a unital positive linear map, we get
\begin{equation}\label{17}
f\left( \left\langle \Phi \left( A \right)x,x \right\rangle  \right)\le \left\langle f\left( \Phi \left( A \right) \right)x,x \right\rangle
\end{equation}
for any unit vector $x\in \mathcal{K}$.
Assume that ${{A}_{1}},\ldots ,{{A}_{n}}$ are self-adjoint operators on $\mathcal{H}$ with spectra in $J$  and ${{\Phi }_{1}},\ldots ,{{\Phi }_{n}}:\mathcal{B}\left( \mathcal{H} \right)\to \mathcal{B}\left( \mathcal{K} \right)$ are positive linear maps with $\sum\nolimits_{i=1}^{n}{{{\Phi }_{i}}\left( {{\mathbf{1}}_{\mathcal{H}}} \right)}={{\mathbf{1}}_{\mathcal{K}}}$.  Now apply inequality \eqref{17} to the self-adjoint operator $A$ on the Hilbert space $\mathcal{H}\oplus \cdots \oplus \mathcal{H}$ defined by $A={{A}_{1}}\oplus \cdots \oplus {{A}_{n}}$  and the positive linear map $\Phi $ defined on $\mathcal{B}\left( \mathcal{H}\oplus \cdots \oplus \mathcal{H} \right)$  by $\Phi \left( A \right)={{\Phi }_{1}}\left( {{A}_{1}} \right)\oplus \cdots \oplus {{\Phi }_{n}}\left( {{A}_{n}} \right)$. Thus,
\begin{equation}\label{18}
f\left( \left\langle \sum\limits_{i=1}^{n}{{{\Phi }_{i}}\left( {{A}_{i}} \right)}x,x \right\rangle  \right)\le \left\langle f\left( \sum\limits_{i=1}^{n}{{{\Phi }_{i}}\left( {{A}_{i}} \right)} \right)x,x \right\rangle.
\end{equation}

Let us also recall that if $f$ is a convex function on an interval $J$, then for each point $\left( s,f\left( s \right) \right)$,
there exists a real number ${{C}_{s}}$ such that
\begin{equation}\label{1}
{{C}_{s}}\left( t-s \right)+f\left( s \right)\le f\left( t \right),\text{ }\left( t\in J \right).
\end{equation}

Inequality \eqref{18}, together with \eqref{1} yield the following theorem.
\begin{theorem}\label{4}
Let $f:J\to \mathbb{R}$ be a monotone convex function, ${{A}_{1}},\ldots ,{{A}_{n}}\in \mathcal{B}\left( \mathcal{H} \right)$ self-adjoint operators with the spectra in $J$, and let ${{\Phi }_{1}},\ldots ,{{\Phi }_{n}}:\mathcal{B}\left( \mathcal{H} \right)\to \mathcal{B}\left( \mathcal{K} \right)$ be positive linear mappings such that $\sum\nolimits_{i=1}^{n}{{{\Phi }_{i}}\left( {{\mathbf{1}}_{\mathcal{H}}} \right)}={{\mathbf{1}}_{\mathcal{K}}}$. Then 
\begin{equation}\label{5}
\sum\limits_{i=1}^{n}{{{\Phi }_{i}}\left( f\left( {{A}_{i}} \right) \right)}\le f\left( \sum\limits_{i=1}^{n}{{{\Phi }_{i}}\left( {{A}_{i}} \right)} \right)+\delta \mathbf{1}_\mathcal{K}
\end{equation}
where
\[\delta =\sup \left\{ \left\langle \sum\limits_{i=1}^{n}{{{\Phi }_{i}}\left( {{C}_{{{A}_{i}}}}{{A}_{i}} \right)}x,x \right\rangle -\left\langle \sum\limits_{i=1}^{n}{{{\Phi }_{i}}\left( {{A}_{i}} \right)}x,x \right\rangle \left\langle \sum\limits_{i=1}^{n}{{{\Phi }_{i}}\left( {{C}_{{{A}_{i}}}} \right)}x,x \right\rangle :\text{ }x\in \mathcal{K};~\left\| x \right\|=1 \right\}.\] 
\end{theorem}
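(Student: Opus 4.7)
The plan is to combine the subgradient inequality \eqref{1} with the scalar Jensen bound \eqref{18}, using the continuous functional calculus to lift everything to operators. The first step is to rearrange \eqref{1}: by swapping the roles of $s$ and $t$ in \eqref{1} and moving terms, one obtains the ``dual'' form
\[
f(t) \le f(s) + C_t\,(t-s), \qquad s,t\in J,
\]
which crucially gives an \emph{upper} bound on $f(t)$ anchored at an arbitrary point $s\in J$. Fixing a scalar $s\in J$ and letting $t$ run over the spectrum of $A_i$, the continuous functional calculus yields the operator inequality
\[
f(A_i) \le f(s)\,\mathbf{1}_{\mathcal{H}} + C_{A_i}A_i - s\,C_{A_i},
\]
where $C_{A_i}$ is obtained by applying the function $s\mapsto C_s$ (e.g.\ the right derivative of $f$) to $A_i$; note that $C_{A_i}$ commutes with $A_i$ as both are functions of the same self-adjoint operator.

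Next, I would push this through the positive linear maps $\Phi_i$ (which preserve operator order), sum over $i$, and use $\sum_i \Phi_i(\mathbf{1}_{\mathcal{H}}) = \mathbf{1}_{\mathcal{K}}$ to obtain
\[
\sum_{i=1}^n \Phi_i(f(A_i)) \le f(s)\,\mathbf{1}_{\mathcal{K}} + \sum_{i=1}^n \Phi_i(C_{A_i}A_i) - s\sum_{i=1}^n \Phi_i(C_{A_i}).
\]
Pairing with an arbitrary unit vector $x\in\mathcal{K}$ and then specializing to the scalar choice $s = \langle \sum_i \Phi_i(A_i)x,x\rangle$ — which lies in $J$ because the convex hull of the spectra of the $A_i$ sits inside $J$ and $\sum_i \Phi_i$ is unital — produces
\[
\Bigl\langle \sum_i \Phi_i(f(A_i))x,x\Bigr\rangle \le f\bigl(\langle \textstyle\sum_i \Phi_i(A_i)x,x\rangle\bigr) + \delta_x,
\]
where $\delta_x$ is precisely the expression inside the supremum defining $\delta$. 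Invoking \eqref{18} to replace the middle term by $\langle f(\sum_i \Phi_i(A_i))x,x\rangle$ and noting $\delta_x\le \delta$, the inequality holds for every unit $x$, which is equivalent to the operator inequality \eqref{5}.

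The main technical obstacle is making sense of $C_{A_i}$ via functional calculus: the subgradient slope $C_s$ is only uniquely defined where $f$ is differentiable, so one must choose a specific selection (say the right derivative) that is monotone in $s$, hence Borel measurable and continuous off a countable set — sufficient to be integrated against the spectral measure of $A_i$. The monotone-convex hypothesis on $f$ is what makes this selection well behaved (and guarantees $C_s$ has constant sign). Once this regularity issue and the containment $\langle\sum_i\Phi_i(A_i)x,x\rangle\in J$ are handled, the remaining steps are just linearity, positivity of $\Phi_i$, and taking a supremum.
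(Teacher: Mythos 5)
Your proposal is correct and follows essentially the same route as the paper: the ``dual form'' you obtain by swapping $s$ and $t$ in \eqref{1} is exactly the substitution the paper performs (anchoring the subgradient slope at $A_i$ via functional calculus while keeping the other variable scalar), and the subsequent steps --- applying the $\Phi_i$, summing, pairing with a unit vector $x$, specializing the scalar to $\langle \sum_{i=1}^{n}\Phi_i(A_i)x,x\rangle$, bounding by $\delta$, and invoking \eqref{18} --- coincide with the paper's argument. Your closing remark about choosing a measurable selection of $s\mapsto C_s$ addresses a regularity point that the paper passes over silently.
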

\begin{proof}
Fix $t\in J$. Since $J$ contains the spectra of the ${{A}_{i}}$ for $i=1,\ldots ,n$, we may replace $s$ in the inequality \eqref{1} by ${{A}_{i}}$, via a functional calculus to get
\[f\left( {{A}_{i}} \right)\le f\left( t \right){{\mathbf{1}}_{\mathcal{H}}}+{{C}_{{{A}_{i}}}}{{A}_{i}}-t{{C}_{{{A}_{i}}}}.\]
Applying the positive linear mappings ${{\Phi }_{i}}$ and summing on $i$ from 1 to $n$, this implies
\begin{equation}\label{2}
\sum\limits_{i=1}^{n}{{{\Phi }_{i}}\left( f\left( {{A}_{i}} \right) \right)}\le f\left( t \right){{\mathbf{1}}_{\mathcal{K}}}+\sum\limits_{i=1}^{n}{{{\Phi }_{i}}\left( {{C}_{{{A}_{i}}}}{{A}_{i}} \right)}-t\sum\limits_{i=1}^{n}{{{\Phi }_{i}}\left( {{C}_{{{A}_{i}}}} \right)}.
\end{equation}
The inequality \eqref{2} easily implies, for any $x\in \mathcal{K}$ with $\left\| x \right\|=1$,
\begin{equation}\label{3}
\left\langle \sum\limits_{i=1}^{n}{{{\Phi }_{i}}\left( f\left( {{A}_{i}} \right) \right)}x,x \right\rangle \le f\left( t \right)+\left\langle \sum\limits_{i=1}^{n}{{{\Phi }_{i}}\left( {{C}_{{{A}_{i}}}}{{A}_{i}} \right)}x,x \right\rangle -t\left\langle \sum\limits_{i=1}^{n}{{{\Phi }_{i}}\left( {{C}_{{{A}_{i}}}} \right)}x,x \right\rangle.
\end{equation}
Since $\sum\nolimits_{i=1}^{n}{{{\Phi }_{i}}\left( {{\mathbf{1}}_{\mathcal{H}}} \right)}={{\mathbf{1}}_{\mathcal{K}}}$ we have $\left\langle \sum\nolimits_{i=1}^{n}{{{\Phi }_{i}}\left( {{A}_{i}} \right)}x,x \right\rangle \in J$  where $x\in \mathcal{K}$ with $\left\| x \right\|=1$. Therefore, we may replace $t$ by $\left\langle \sum\nolimits_{i=1}^{n}{{{\Phi }_{i}}\left( {{A}_{i}} \right)}x,x \right\rangle $  in \eqref{3}. This yields
\begin{equation}\label{15}
\begin{aligned}
 \left\langle \sum\limits_{i=1}^{n}{{{\Phi }_{i}}\left( f\left( {{A}_{i}} \right) \right)}x,x \right\rangle &\le f\left( \left\langle \sum\limits_{i=1}^{n}{{{\Phi }_{i}}\left( {{A}_{i}} \right)}x,x \right\rangle  \right) \\ 
&~ +\left\langle \sum\limits_{i=1}^{n}{{{\Phi }_{i}}\left( {{C}_{{{A}_{i}}}}{{A}_{i}} \right)}x,x \right\rangle -\left\langle \sum\limits_{i=1}^{n}{{{\Phi }_{i}}\left( {{A}_{i}} \right)}x,x \right\rangle \left\langle \sum\limits_{i=1}^{n}{{{\Phi }_{i}}\left( {{C}_{{{A}_{i}}}} \right)}x,x \right\rangle. 
\end{aligned}
\end{equation}
On ther other hand,
\[\begin{aligned}
 0&\le \left\langle \sum\limits_{i=1}^{n}{{{\Phi }_{i}}\left( {{C}_{{{A}_{i}}}}{{A}_{i}} \right)}x,x \right\rangle -\left\langle \sum\limits_{i=1}^{n}{{{\Phi }_{i}}\left( {{A}_{i}} \right)}x,x \right\rangle \left\langle \sum\limits_{i=1}^{n}{{{\Phi }_{i}}\left( {{C}_{{{A}_{i}}}} \right)}x,x \right\rangle  \\ 
& \le \underset{\left\| x \right\|=1}{\mathop{\underset{x\in \mathcal{K}}{\mathop{\sup }}\,}}\,\left\{ \left\langle \sum\limits_{i=1}^{n}{{{\Phi }_{i}}\left( {{C}_{{{A}_{i}}}}{{A}_{i}} \right)}x,x \right\rangle -\left\langle \sum\limits_{i=1}^{n}{{{\Phi }_{i}}\left( {{A}_{i}} \right)}x,x \right\rangle \left\langle \sum\limits_{i=1}^{n}{{{\Phi }_{i}}\left( {{C}_{{{A}_{i}}}} \right)}x,x \right\rangle  \right\}  
\end{aligned}\]
thanks to \eqref{19}. Therefore,
\begin{align}
 \left\langle \sum\limits_{i=1}^{n}{{{\Phi }_{i}}\left( f\left( {{A}_{i}} \right) \right)}x,x \right\rangle &\le f\left( \left\langle \sum\limits_{i=1}^{n}{{{\Phi }_{i}}\left( {{A}_{i}} \right)}x,x \right\rangle  \right)+\delta  \nonumber\\ 
& \le \left\langle f\left( \sum\limits_{i=1}^{n}{{{\Phi }_{i}}\left( {{A}_{i}} \right)} \right)x,x \right\rangle +\delta \quad \text{(by \eqref{18})}\nonumber.   
\end{align}
This completes the proof.
\end{proof}

\begin{remark}
Inequality \eqref{15} provides the reverse of the inequality \eqref{19}.
\end{remark}

In the next theorem, we aim to present operator Jensen-type inequality without operator convexity assumption.
\begin{theorem}\label{21}
	Let all the assumptions of Theorem \ref{4} hold, then
	\begin{equation}\label{22}
	f\left( \sum\limits_{i=1}^{n}{{{\Phi }_{i}}\left( {{A}_{i}} \right)} \right)\le \sum\limits_{i=1}^{n}{{{\Phi }_{i}}\left( f\left( {{A}_{i}} \right) \right)}+\zeta\mathbf{1}_\mathcal{K}
	\end{equation}
	where
	{\small
		\[\zeta =\sup \left\{ \left\langle {{C}_{\sum\nolimits_{i=1}^{n}{{{\Phi }_{i}}\left( {{A}_{i}} \right)}}}\sum\limits_{i=1}^{n}{{{\Phi }_{i}}\left( {{A}_{i}} \right)}x,x \right\rangle -\left\langle \sum\limits_{i=1}^{n}{{{\Phi }_{i}}\left( {{A}_{i}} \right)}x,x \right\rangle \left\langle {{C}_{\sum\nolimits_{i=1}^{n}{{{\Phi }_{i}}\left( {{A}_{i}} \right)}}}x,x \right\rangle :\text{ }x\in \mathcal{K};~\left\| x \right\|=1 \right\}.\]
	}
\end{theorem}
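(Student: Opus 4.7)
The plan is to mirror the proof of Theorem \ref{4}, but with the roles of the individual summands $A_i$ and the aggregated operator $B := \sum_{i=1}^n \Phi_i(A_i)$ exchanged. Since each $A_i$ has spectrum in $J$ and $\sum_i \Phi_i(\mathbf{1}_{\mathcal H})=\mathbf{1}_{\mathcal K}$, the self-adjoint operator $B$ also has spectrum in $J$, so functional calculus on $J$ is legitimate at $B$. In particular, the ``slope map'' $s\mapsto C_s$ may be applied to $B$ to produce the operator $C_B$, just as $C_{A_i}$ was formed in Theorem \ref{4}.

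First I would apply the subgradient inequality \eqref{1} with $s$ replaced by $B$ via functional calculus, to get, for every $t\in J$,
\[
f(B)\le f(t)\mathbf{1}_{\mathcal K}+C_B B-t\,C_B.
\]
Pairing with a unit vector $x\in\mathcal K$ yields the scalar inequality
\[
\langle f(B)x,x\rangle\le f(t)+\langle C_B Bx,x\rangle-t\,\langle C_B x,x\rangle,
\]
in which $t$ may now be chosen as a function of $x$. The natural choice is $t:=\langle Bx,x\rangle$, which lies in $J$ because $B$ is self-adjoint with spectrum in $J$; this gives
\[
\langle f(B)x,x\rangle\le f\bigl(\langle Bx,x\rangle\bigr)+\bigl(\langle C_B Bx,x\rangle-\langle Bx,x\rangle\langle C_B x,x\rangle\bigr).
\]

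To finish, I would invoke \eqref{19} to replace $f(\langle Bx,x\rangle)=f(\langle\sum_i\Phi_i(A_i)x,x\rangle)$ by $\langle\sum_i\Phi_i(f(A_i))x,x\rangle$, while the two-term remainder is exactly the expression inside the supremum defining $\zeta$ and is therefore bounded above by $\zeta$. Taking the supremum over unit vectors $x$ produces the scalar inequality
\[
\langle f(B)x,x\rangle\le\Bigl\langle\sum_{i=1}^{n}\Phi_i(f(A_i))x,x\Bigr\rangle+\zeta,
\]
which, holding for every unit $x\in\mathcal K$, is equivalent to the operator inequality \eqref{22}. The only technical point to keep in mind is that $s\mapsto C_s$ need not be continuous, so $C_B$ is to be interpreted via Borel functional calculus (using, e.g., the right derivative of $f$); this is the same convention already used implicitly in Theorem \ref{4}, so no new obstacle arises. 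The crux of the argument is really the swap in where the subgradient is anchored: anchoring at $B$ rather than at the $A_i$ reverses the direction of the inequality and lets us exploit \eqref{19} instead of \eqref{18} at the closing step.
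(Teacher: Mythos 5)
Your argument is essentially identical to the paper's proof: anchor the subgradient inequality \eqref{1} at $B=\sum_{i}\Phi_i(A_i)$ via functional calculus, pair with a unit vector, substitute $t=\langle Bx,x\rangle$, bound the remainder by its supremum $\zeta$, and close with \eqref{19}. Your added remark on interpreting $C_B$ via the (Borel-measurable) one-sided derivative is a sensible technical clarification that the paper glosses over, but it does not change the route.
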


\begin{proof}
	Fix $t\in J$. Since $J$ contains the spectra of the ${{A}_{i}}$ for $i=1,\ldots ,n$ and $\sum\nolimits_{i=1}^{n}{{{\Phi }_{i}}\left( {{\mathbf{1}}_{\mathcal{H}}} \right)}={{\mathbf{1}}_{\mathcal{K}}}$, so the spectra of $\sum\nolimits_{i=1}^{n}{{{\Phi }_{i}}\left( {{A}_{i}} \right)}$ is also contained in $J$. Then we may replace $s$ in the inequality \eqref{1} by $\sum\nolimits_{i=1}^{n}{{{\Phi }_{i}}\left( {{A}_{i}} \right)}$, via a functional calculus to get
	\[f\left( \sum\limits_{i=1}^{n}{{{\Phi }_{i}}\left( {{A}_{i}} \right)} \right)\le f\left( t \right){{\mathbf{1}}_{\mathcal{K}}}+{{C}_{\sum\nolimits_{i=1}^{n}{{{\Phi }_{i}}\left( {{A}_{i}} \right)}}}\sum\limits_{i=1}^{n}{{{\Phi }_{i}}\left( {{A}_{i}} \right)}-t{{C}_{\sum\nolimits_{i=1}^{n}{{{\Phi }_{i}}\left( {{A}_{i}} \right)}}}.\]
	This inequality implies, for any $x\in \mathcal{K}$ with $\left\| x \right\|=1$,
	\begin{equation}\label{20}
	\left\langle f\left( \sum\limits_{i=1}^{n}{{{\Phi }_{i}}\left( {{A}_{i}} \right)} \right)x,x \right\rangle \le f\left( t \right)+\left\langle {{C}_{\sum\nolimits_{i=1}^{n}{{{\Phi }_{i}}\left( {{A}_{i}} \right)}}}\sum\limits_{i=1}^{n}{{{\Phi }_{i}}\left( {{A}_{i}} \right)}x,x \right\rangle -t\left\langle {{C}_{\sum\nolimits_{i=1}^{n}{{{\Phi }_{i}}\left( {{A}_{i}} \right)}}}x,x \right\rangle. 
	\end{equation}
	Substituting $t$ with $\left\langle \sum\nolimits_{i=1}^{n}{{{\Phi }_{i}}\left( {{A}_{i}} \right)}x,x \right\rangle $  in \eqref{20}. Thus,
	{\small
		\begin{equation}\label{23}
		\begin{aligned}
		\left\langle f\left( \sum\limits_{i=1}^{n}{{{\Phi }_{i}}\left( {{A}_{i}} \right)} \right)x,x \right\rangle &\le f\left( \left\langle \sum\limits_{i=1}^{n}{{{\Phi }_{i}}\left( {{A}_{i}} \right)}x,x \right\rangle  \right) \\ 
		&~ +\left\langle {{C}_{\sum\nolimits_{i=1}^{n}{{{\Phi }_{i}}\left( {{A}_{i}} \right)}}}\sum\limits_{i=1}^{n}{{{\Phi }_{i}}\left( {{A}_{i}} \right)}x,x \right\rangle -\left\langle \sum\limits_{i=1}^{n}{{{\Phi }_{i}}\left( {{A}_{i}} \right)}x,x \right\rangle \left\langle {{C}_{\sum\nolimits_{i=1}^{n}{{{\Phi }_{i}}\left( {{A}_{i}} \right)}}}x,x \right\rangle.
		\end{aligned}
		\end{equation}
	}
	On the other hand,
	\[\begin{aligned}
	0&\le \left\langle {{C}_{\sum\nolimits_{i=1}^{n}{{{\Phi }_{i}}\left( {{A}_{i}} \right)}}}\sum\limits_{i=1}^{n}{{{\Phi }_{i}}\left( {{A}_{i}} \right)}x,x \right\rangle -\left\langle \sum\limits_{i=1}^{n}{{{\Phi }_{i}}\left( {{A}_{i}} \right)}x,x \right\rangle \left\langle {{C}_{\sum\nolimits_{i=1}^{n}{{{\Phi }_{i}}\left( {{A}_{i}} \right)}}}x,x \right\rangle  \\ 
	& \le \underset{\left\| x \right\|=1}{\mathop{\underset{x\in \mathcal{K}}{\mathop{\sup }}\,}}\,\left\{ \left\langle {{C}_{\sum\nolimits_{i=1}^{n}{{{\Phi }_{i}}\left( {{A}_{i}} \right)}}}\sum\limits_{i=1}^{n}{{{\Phi }_{i}}\left( {{A}_{i}} \right)}x,x \right\rangle -\left\langle \sum\limits_{i=1}^{n}{{{\Phi }_{i}}\left( {{A}_{i}} \right)}x,x \right\rangle \left\langle {{C}_{\sum\nolimits_{i=1}^{n}{{{\Phi }_{i}}\left( {{A}_{i}} \right)}}}x,x \right\rangle  \right\}  
	\end{aligned}\]
	thanks to \eqref{18}. Consequently,
	\begin{align}
	\left\langle f\left( \sum\limits_{i=1}^{n}{{{\Phi }_{i}}\left( {{A}_{i}} \right)} \right)x,x \right\rangle &\le f\left( \left\langle \sum\limits_{i=1}^{n}{{{\Phi }_{i}}\left( {{A}_{i}} \right)}x,x \right\rangle  \right)+\zeta  \nonumber\\ 
	& \le \left\langle \sum\limits_{i=1}^{n}{{{\Phi }_{i}}\left( f\left( {{A}_{i}} \right) \right)}x,x \right\rangle +\zeta \quad \text{(by \eqref{19})}  \nonumber
	\end{align}
	and the proof is complete.
\end{proof}

\begin{remark}
Notice that inequality \eqref{23} can be considered as a converse of inequality \eqref{18}.
\end{remark}

\section{Some Applications}
In this section, we collect some consequences of Theorems \ref{4} and \ref{21}.

\medskip
{\bf(I)}	Suppose, in addition to the assumptions in Theorem \ref{4}, $f$ is differentiable on $J$ whose derivative $f'$ is continuous on $J$, then \eqref{5} and \eqref{22} hold with
	{\small
		\[\delta =\sup \left\{ \left\langle \sum\limits_{i=1}^{n}{{{\Phi }_{i}}\left( f'\left( {{A}_{i}} \right){{A}_{i}} \right)}x,x \right\rangle -\left\langle \sum\limits_{i=1}^{n}{{{\Phi }_{i}}\left( {{A}_{i}} \right)}x,x \right\rangle \left\langle \sum\limits_{i=1}^{n}{{{\Phi }_{i}}\left( f'\left( {{A}_{i}} \right) \right)}x,x \right\rangle :\text{ }x\in \mathcal{K};~\left\| x \right\|=1 \right\}\]
	}
	and
	\[\begin{aligned}
	\zeta &=\sup \left\{ \left\langle f'\left( \sum\limits_{i=1}^{n}{{{\Phi }_{i}}\left( {{A}_{i}} \right)} \right)\sum\limits_{i=1}^{n}{{{\Phi }_{i}}\left( {{A}_{i}} \right)}x,x \right\rangle  \right. \\ 
	&~ \left. -\left\langle \sum\limits_{i=1}^{n}{{{\Phi }_{i}}\left( {{A}_{i}} \right)}x,x \right\rangle \left\langle f'\left( \sum\limits_{i=1}^{n}{{{\Phi }_{i}}\left( {{A}_{i}} \right)} \right)x,x \right\rangle :\text{ }x\in \mathcal{K};~\left\| x \right\|=1 \right\}. 
	\end{aligned}\]

\medskip

{\bf(II)} By setting $f\left( t \right)={{t}^{p}}\left( p\ge 1\right)$ in Theorems \ref{4} and \ref{21} we find that:
	\begin{equation}\label{6}
	\sum\limits_{i=1}^{n}{{{\Phi }_{i}}\left( A_{i}^{p} \right)}\le {{\left( \sum\limits_{i=1}^{n}{{{\Phi }_{i}}\left( {{A}_{i}} \right)} \right)}^{p}}+p\delta\mathbf{1}_\mathcal{K}
	\end{equation}
	where
	\[\delta =\sup \left\{ \left\langle \sum\limits_{i=1}^{n}{{{\Phi }_{i}}\left( A_{i}^{p} \right)}x,x \right\rangle -\left\langle \sum\limits_{i=1}^{n}{{{\Phi }_{i}}\left( {{A}_{i}} \right)}x,x \right\rangle \left\langle \sum\limits_{i=1}^{n}{{{\Phi }_{i}}\left( A_{i}^{p-1} \right)}x,x \right\rangle :\text{ }x\in \mathcal{K};~\left\| x \right\|=1 \right\}\]
and
	\begin{equation}\label{24}
	{{\left( \sum\limits_{i=1}^{n}{{{\Phi }_{i}}\left( {{A}_{i}} \right)} \right)}^{p}}\le \sum\limits_{i=1}^{n}{{{\Phi }_{i}}\left( A_{i}^{p} \right)}+p\zeta\mathbf{1}_\mathcal{K}
	\end{equation}
	where
	{\small
		\[\zeta =\sup \left\{ \left\langle {{\left( \sum\limits_{i=1}^{n}{{{\Phi }_{i}}\left( {{A}_{i}} \right)} \right)}^{p}}x,x \right\rangle -\left\langle \sum\limits_{i=1}^{n}{{{\Phi }_{i}}\left( {{A}_{i}} \right)}x,x \right\rangle \left\langle {{\left( \sum\limits_{i=1}^{n}{{{\Phi }_{i}}\left( {{A}_{i}} \right)} \right)}^{p-1}}x,x \right\rangle :\text{ }x\in \mathcal{K};~\left\| x \right\|=1 \right\}\]
	}
whenever ${{A}_{1}},\ldots ,{{A}_{n}}\in \mathcal{B}\left( \mathcal{H} \right)$ are positive operators and  ${{\Phi }_{1}},\ldots ,{{\Phi }_{n}}:\mathcal{B}\left( \mathcal{H} \right)\to \mathcal{B}\left( \mathcal{K} \right)$  positive linear mappings such that $\sum\nolimits_{i=1}^{n}{{{\Phi }_{i}}\left( {{\mathbf{1}}_{\mathcal{H}}} \right)}={{\mathbf{1}}_{\mathcal{K}}}$.

If the operators ${{A}_{1}},\ldots ,{{A}_{n}}$ are strictly positive, then \eqref{6} and \eqref{24} are also true for $p<0$.

\medskip

{\bf(III)} Assume that ${{w}_{1}},\ldots ,{{w}_{n}}$ are positive scalars such that $\sum\nolimits_{i=1}^{n}{{{w}_{i}}}=1$. If we apply Theorems \ref{4} and \ref{21} for positive linear mappings ${{\Phi }_{i}}:\mathcal{B}\left( \mathcal{H} \right)\to \mathcal{B}\left( \mathcal{H} \right)$ determined by ${{\Phi }_{i}}:T\mapsto {{w}_{i}}T\left( i=1,\ldots ,n \right)$, we get
	\[\sum\limits_{i=1}^{n}{{{w}_{i}}f\left( {{A}_{i}} \right)}\le f\left( \sum\limits_{i=1}^{n}{{{w}_{i}}{{A}_{i}}} \right)+\delta {{\mathbf{1}}_{\mathcal{K}}}\]
	where
	\[\delta =\sup \left\{ \left\langle \sum\limits_{i=1}^{n}{{{w}_{i}}{{C}_{{{A}_{i}}}}{{A}_{i}}}x,x \right\rangle -\left\langle \sum\limits_{i=1}^{n}{{{w}_{i}}{{A}_{i}}}x,x \right\rangle \left\langle \sum\limits_{i=1}^{n}{{{w}_{i}}{{C}_{{{A}_{i}}}}}x,x \right\rangle :\text{ }x\in \mathcal{K};~\left\| x \right\|=1 \right\}\]
and
\[f\left( \sum\limits_{i=1}^{n}{{{w}_{i}}{{A}_{i}}} \right)\le \sum\limits_{i=1}^{n}{{{w}_{i}}f\left( {{A}_{i}} \right)}+\zeta {{\mathbf{1}}_{\mathcal{K}}}\]
where
\[\zeta =\sup \left\{ \left\langle {{C}_{\sum\nolimits_{i=1}^{n}{{{w}_{i}}{{A}_{i}}}}}\sum\limits_{i=1}^{n}{{{w}_{i}}{{A}_{i}}}x,x \right\rangle -\left\langle \sum\limits_{i=1}^{n}{{{w}_{i}}{{A}_{i}}}x,x \right\rangle \left\langle {{C}_{\sum\nolimits_{i=1}^{n}{{{w}_{i}}{{A}_{i}}}}}x,x \right\rangle :\text{ }x\in \mathcal{K};~\left\| x \right\|=1 \right\}.\]

\medskip

Choi's inequality \cite[Proposition 4.3]{3} says that
\begin{equation}\label{11}
\Phi \left( B \right)\Phi {{\left( A \right)}^{-1}}\Phi \left( B \right)\le \Phi \left( B{{A}^{-1}}B \right)
\end{equation}
whenever $B$ is self-adjoint and $A$ is positive invertible. We shall show the following complementary inequality of \eqref{11}:
\begin{proposition}\label{12}
	Let $A,B\in \mathcal{B}\left( \mathcal{H} \right)$ such that $B$ is self-adjoint and $A$ is positive invertible, and let $\Phi :\mathcal{B}\left( \mathcal{H} \right)\to \mathcal{B}\left( \mathcal{K} \right)$ be a unital positive linear mapping. Then
	\begin{equation}\label{9}
	\Phi \left( B{{A}^{-1}}B \right)\le \Phi \left( B \right)\Phi {{\left( A \right)}^{-1}}\Phi \left( B \right)+2\delta \Phi \left( A \right)
	\end{equation}
	where
	{\small
		\[\delta =\sup \left\{ \left\langle \Phi {{\left( A \right)}^{-\frac{1}{2}}}\Phi \left( B{{A}^{-1}}B \right)\Phi {{\left( A \right)}^{-\frac{1}{2}}}x,x \right\rangle -{{\left\langle \Phi {{\left( A \right)}^{-\frac{1}{2}}}\Phi \left( B \right)\Phi {{\left( A \right)}^{-\frac{1}{2}}}x,x \right\rangle }^{2}}:\text{ }x\in \mathcal{K};~\left\| x \right\|=1 \right\}.\]
	}
\end{proposition}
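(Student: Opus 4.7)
The plan is to recast the desired inequality as a reverse Kadison--Schwarz inequality for a cleverly chosen unital positive linear map, and then invoke Theorem \ref{4} with $n=1$ and $f(t)=t^2$ (equivalently, the $p=2$ instance of \eqref{6}). Once the reduction is in place, what remains is a conjugation that unfolds telescopically.

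Concretely, I would introduce $\Psi:\mathcal{B}(\mathcal{H})\to\mathcal{B}(\mathcal{K})$ by
\[
\Psi(T)=\Phi(A)^{-1/2}\Phi\bigl(A^{1/2}TA^{1/2}\bigr)\Phi(A)^{-1/2}
\]
and set $C=A^{-1/2}BA^{-1/2}$. Because $\Phi$ is positive linear and $\Phi(A)>0$, $\Psi$ is positive and satisfies $\Psi(\mathbf{1}_{\mathcal{H}})=\mathbf{1}_{\mathcal{K}}$, while $C$ is self-adjoint. A short computation gives $\Psi(C)=\Phi(A)^{-1/2}\Phi(B)\Phi(A)^{-1/2}$ and $\Psi(C^2)=\Phi(A)^{-1/2}\Phi(BA^{-1}B)\Phi(A)^{-1/2}$. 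This is exactly the substitution behind Choi's inequality \eqref{11}.

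Next, I would apply \eqref{6} (i.e.\ Theorem \ref{4} with $n=1$ and $f(t)=t^2$) to the pair $(\Psi,C)$, obtaining
\[
\Psi(C^2)\le \Psi(C)^2+2\widetilde{\delta}\,\mathbf{1}_{\mathcal{K}},
\qquad
\widetilde{\delta}=\sup\bigl\{\langle\Psi(C^2)x,x\rangle-\langle\Psi(C)x,x\rangle^2:\|x\|=1\bigr\}.
\]
Substituting the formulas for $\Psi(C)$ and $\Psi(C^2)$ shows that $\widetilde{\delta}$ is \emph{precisely} the $\delta$ displayed in the proposition, so no further identification is needed.

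Finally, I would conjugate both sides by $\Phi(A)^{1/2}$, which is legitimate because $\Phi(A)^{1/2}\ge 0$ preserves operator inequalities. The left-hand side collapses to $\Phi(BA^{-1}B)$; the middle term telescopes, via $\Phi(A)^{1/2}\Phi(A)^{-1/2}=\mathbf{1}_{\mathcal{K}}$, to $\Phi(B)\Phi(A)^{-1}\Phi(B)$; and $2\widetilde{\delta}\,\mathbf{1}_{\mathcal{K}}$ becomes $2\delta\,\Phi(A)$, which yields \eqref{9}. The only genuine idea in the argument is spotting the substitution $(\Psi,C)$ that turns the proposition into a reverse Kadison--Schwarz statement to which Theorem \ref{4} applies; after that, the proof is mechanical linear algebra.
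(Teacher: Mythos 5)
Your proposal is correct and follows essentially the same route as the paper: the same auxiliary unital positive map $\Psi(X)=\Phi(A)^{-1/2}\Phi(A^{1/2}XA^{1/2})\Phi(A)^{-1/2}$, the same substitution $T=A^{-1/2}BA^{-1/2}$, the same application of Theorem \ref{4} with $f(t)=t^{2}$, and the same final conjugation by $\Phi(A)^{1/2}$. The only cosmetic difference is your parenthetical appeal to \eqref{6}, which is stated there for \emph{positive} operators, whereas $A^{-1/2}BA^{-1/2}$ is merely self-adjoint; citing Theorem \ref{4} directly, as you also do and as the paper does, sidesteps this.
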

\begin{proof}
	It follows from Theorem \ref{4} that
	\begin{equation}\label{8}
	\Psi \left( {{T}^{2}} \right)\le \Psi {{\left( T \right)}^{2}}+2\delta {{\mathbf{1}}_{\mathcal{K}}}	
	\end{equation}
	where
	\[\delta =\sup \left\{ \left\langle \Psi \left( {{T}^{2}} \right)x,x \right\rangle -{{\left\langle \Psi \left( T \right)x,x \right\rangle }^{2}}:\text{ }x\in \mathcal{K};~\left\| x \right\|=1 \right\}.\]
	To a fixed positive $A\in \mathcal{B}\left( \mathcal{H} \right)$ we set
	\[\Psi \left( X \right)=\Phi {{\left( A \right)}^{-\frac{1}{2}}}\Phi \left( {{A}^{\frac{1}{2}}}X{{A}^{\frac{1}{2}}} \right)\Phi {{\left( A \right)}^{-\frac{1}{2}}}\]
	and notice that $\Psi :\mathcal{B}\left( \mathcal{H} \right)\to \mathcal{B}\left( \mathcal{K} \right)$ is a unital linear map. Now, if $T={{A}^{-\frac{1}{2}}}B{{A}^{-\frac{1}{2}}}$, we infer from \eqref{8} that
	\[\Phi {{\left( A \right)}^{-\frac{1}{2}}}\Phi \left( B{{A}^{-1}}B \right)\Phi {{\left( A \right)}^{-\frac{1}{2}}}\le \Phi {{\left( A \right)}^{-\frac{1}{2}}}\Phi \left( B \right)\Phi {{\left( A \right)}^{-1}}\Phi \left( B \right)\Phi {{\left( A \right)}^{-\frac{1}{2}}}+2\delta {{\mathbf{1}}_{\mathcal{K}}}\]
	where
	{\small
		\[\delta =\sup \left\{ \left\langle \Phi {{\left( A \right)}^{-\frac{1}{2}}}\Phi \left( B{{A}^{-1}}B \right)\Phi {{\left( A \right)}^{-\frac{1}{2}}}x,x \right\rangle -{{\left\langle \Phi {{\left( A \right)}^{-\frac{1}{2}}}\Phi \left( B \right)\Phi {{\left( A \right)}^{-\frac{1}{2}}}x,x \right\rangle }^{2}}:\text{ }x\in \mathcal{K};~\left\| x \right\|=1 \right\}.\]
	}
	By multiplying from the left and from the right with $\Phi {{\left( A \right)}^{\frac{1}{2}}}$ we obtain \eqref{9}.
\end{proof}
The parallel sum of two positive operators $A$, $B$ is defined as the operator
\[A:B={{\left( {{A}^{-1}}+{{B}^{-1}} \right)}^{-1}}.\]
A simple calculation shows that (see, e.g., \cite[(4.6) and (4.7)]{4})
\begin{equation}\label{13}
A:B=A-A{{\left( A+B \right)}^{-1}}A=B-B{{\left( A+B \right)}^{-1}}B.
\end{equation}

If $\Phi$ is any positive linear map, then (see \cite[Theorem 4.1.5]{4})
\begin{equation}\label{16}
\Phi \left( A:B \right)\le \Phi \left( A \right):\Phi \left( B \right).
\end{equation}
The following result gives a reverse of inequality \eqref{16}.
\begin{proposition}
	Let $A,B\in \mathcal{B}\left( \mathcal{H} \right)$ positive invertible operators and let $\Phi :\mathcal{B}\left( \mathcal{H} \right)\to \mathcal{B}\left( \mathcal{K} \right)$ be unital positive linear mapping. Then
	\[\Phi \left( A \right):\Phi \left( B \right)\le \Phi \left( A:B \right)+2\delta \Phi \left( A+B \right)\]
	where
	\[\begin{aligned}
	\delta &=\sup \left\{ \left\langle \Phi {{\left( A+B \right)}^{-\frac{1}{2}}}\Phi \left( A{{\left( A+B \right)}^{-1}}A \right)\Phi {{\left( A+B \right)}^{-\frac{1}{2}}}x,x \right\rangle  \right. \\ 
	&\quad \left. -{{\left\langle \Phi {{\left( A+B \right)}^{-\frac{1}{2}}}\Phi \left( A \right)\Phi {{\left( A+B \right)}^{-\frac{1}{2}}}x,x \right\rangle }^{2}}:\text{ }x\in \mathcal{K};~\left\| x \right\|=1 \right\}. 
	\end{aligned}\]
\end{proposition}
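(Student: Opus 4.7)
The plan is to reduce the claim directly to Proposition \ref{12} by exploiting the parallel-sum identity \eqref{13}. First I would rewrite both sides of the desired inequality in ``difference form''. Applying \eqref{13} inside the argument of $\Phi$ and then pushing $\Phi$ through gives
$$\Phi(A:B)=\Phi(A)-\Phi\bigl(A(A+B)^{-1}A\bigr).$$
On the other hand, linearity of $\Phi$ yields $\Phi(A)+\Phi(B)=\Phi(A+B)$, so applying \eqref{13} to the pair $\Phi(A),\Phi(B)$ gives
$$\Phi(A):\Phi(B)=\Phi(A)-\Phi(A)\,\Phi(A+B)^{-1}\,\Phi(A).$$

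Substituting these into the proposed inequality $\Phi(A):\Phi(B)\le \Phi(A:B)+2\delta\,\Phi(A+B)$ and cancelling the common $\Phi(A)$ term reduces the claim to
$$\Phi\bigl(A(A+B)^{-1}A\bigr)\le \Phi(A)\,\Phi(A+B)^{-1}\,\Phi(A)+2\delta\,\Phi(A+B).$$
This is precisely the statement of Proposition \ref{12} under the substitution $A\mapsto A+B$ and $B\mapsto A$: the hypotheses are met because $A+B$ is positive invertible (both summands are) and $A$ is trivially self-adjoint. A direct inspection shows that the $\delta$ produced by this substitution matches verbatim the $\delta$ stated in the present proposition.

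The main obstacle is little more than careful bookkeeping: one must verify that both rewrites via \eqref{13} are legitimate (which uses invertibility of $A+B$ and hence of $\Phi(A+B)=\Phi(A)+\Phi(B)$) and that the $\delta$ expression produced by the substitution agrees term-for-term with the claimed one. All the genuine inequality content is inherited from Proposition \ref{12}, so no further analytic work is required.
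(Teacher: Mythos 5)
Your proof is correct and follows essentially the same route as the paper: both apply Proposition \ref{12} to the pair $(A+B,A)$, then combine the parallel-sum identity \eqref{13} with the linearity of $\Phi$ to convert that inequality into the stated reverse of \eqref{16}. The only difference is presentational --- you cancel the common $\Phi(A)$ term and reduce to Proposition \ref{12}, while the paper writes the same content as a chain of inequalities.
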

\begin{proof}
	Proposition \ref{12} easily implies
	\begin{equation}\label{14}
	\Phi \left( A{{\left( A+B \right)}^{-1}}A \right)\le \Phi \left( A \right)\Phi {{\left( A+B \right)}^{-1}}\Phi \left( A \right)+2\delta \Phi \left( A+B \right)
	\end{equation}
	where
	\[\begin{aligned}
	\delta &=\sup \left\{ \left\langle \Phi {{\left( A+B \right)}^{-\frac{1}{2}}}\Phi \left( A{{\left( A+B \right)}^{-1}}A \right)\Phi {{\left( A+B \right)}^{-\frac{1}{2}}}x,x \right\rangle  \right. \\ 
	&\quad \left. -{{\left\langle \Phi {{\left( A+B \right)}^{-\frac{1}{2}}}\Phi \left( A \right)\Phi {{\left( A+B \right)}^{-\frac{1}{2}}}x,x \right\rangle }^{2}}:\text{ }x\in \mathcal{K};~\left\| x \right\|=1 \right\}. 
	\end{aligned}\]
	Then we have
	\[\begin{aligned}
	\Phi \left( A \right):\Phi \left( B \right)&=\Phi \left( A \right)-\Phi \left( A \right){{\left( \Phi \left( A \right)+\Phi \left( B \right) \right)}^{-1}}\Phi \left( A \right) \quad \text{(by \eqref{13})}\\ 
	& =\Phi \left( A \right)-\Phi \left( A \right)\Phi {{\left( A+B \right)}^{-1}}\Phi \left( A \right) \quad \text{(by the linearity of $\Phi$)}\\ 
	& \le \Phi \left( A \right)-\Phi \left( A{{\left( A+B \right)}^{-1}}A \right)+2\delta \Phi \left( A+B \right) \quad \text{(by \eqref{14})}\\ 
	& =\Phi \left( A-A{{\left( A+B \right)}^{-1}}A \right)+2\delta \Phi \left( A+B \right) \quad \text{(by the linearity of $\Phi$)}\\ 
	& =\Phi \left( A:B \right)+2\delta \Phi \left( A+B \right).  
	\end{aligned}\]
	Hence the conclusions follow.
\end{proof}

\begin{remark}
A function $f:\left[ 0,\infty  \right)\to \mathbb{R}$  is called superquadratic (see \cite[Definition 1]{new0}) if for each $s\ge 0$, there exists a real constant ${{C}_{s}}$ such that	 
\begin{equation}\label{10}
f\left( \left| t-s \right| \right)+{{C}_{s}}\left( t-s \right)+f\left( s \right)\le f\left( t \right)
\end{equation}
for all $t\ge 0$.

By applying the same arguments as in Theorems \ref{4} and \ref{21} for definition \eqref{10}, one can obtain stronger estimates than \eqref{5} and \eqref{22}. 

We leave the elaboration of this idea to the interested reader.
\end{remark}

\vskip 0.3 true cm

{\tiny (M. Shah Hosseini) Department of Mathematics, Shahr-e-Qods Branch, Islamic Azad University, Tehran, Iran.}

{\tiny \textit{E-mail address:} mohsen\_shahhosseini@yahoo.com}

{\tiny \vskip 0.3 true cm }

{\tiny (H. R. Moradi) Young Researchers and Elite Club, Mashhad Branch, Islamic Azad University, Mashhad, Iran.}

{\tiny \textit{E-mail address:} hrmoradi@mshdiau.ac.ir }

{\tiny \vskip 0.3 true cm }

{\tiny (B. Moosavi) Department of Mathematics, Safadasht Branch, Islamic Azad University, Tehran, Iran.}

{\tiny \textit{E-mail address:} baharak\_moosavie@yahoo.com}


\begin{thebibliography}{9}
\bibitem{new0}
S. Abramovich, G. Jameson and G. Sinnamon, {\it Refining Jensen's inequality}, Bull. Math. Soc. Sci. Math. Roumanie., {\bf47} (2004), 3--14.
	
\bibitem{4}
R. Bhatia, {\it Positive definite matrices}, Princeton Series in Applied Mathematics, Princeton, 2007.

\bibitem{8}
M. D. Choi, {\it A Schwarz inequality for positive linear maps on $C^*$--algebras}, Illinois J. Math., {\bf18} (1974), 565--574.

\bibitem{3}
M. D. Choi, {\it Some assorted inequalities for positive linear maps on $C^*$--algebras}, J. Operator Theory., {\bf4} (1980), 271--285.

\bibitem{9}
C. Davis, {\it A Schwarz inequality for convex operator functions}, Proc. Amer. Math. Soc., {\bf8} (1957), 42--44.

\bibitem{5}
S. Furuichi, H. R. Moradi and A. Zardadi, {\it Some new Karamata type inequalities and their applications to some entropies}, Rep. Math. Phys., (2019) (accepted). arXiv:1811.07277.  

\bibitem{7}
T. Furuta, J. Mi\'ci\'c, J. Pe\v cari\'c and Y. Seo, {\it Mond--Pe\v cari\'c method in operator inequalities}, Element, Zagreb, 2005.

\bibitem{1}
F. Hansen, J. Pe\v cari\'c and I. Peri\'c, {\it Jensen's operator inequality and it's converses}, Math. Scand., {\bf100} (2007), 61--73.


\bibitem{10}
L. Horv\'ath, K. A. Khan and J. Pe\v cari\'c, {\it Cyclic refinements of the different versions of operator Jensen's inequality}, Electron. J. Linear Algebra., {\bf31}(1) (2016), 125--133.

 
\bibitem{6}
J. Mi\'ci\'c, H. R. Moradi and S. Furuichi, {\it Choi--Davis--Jensen's inequality without convexity}, J. Math. Inequal., {\bf12}(4) (2018), 1075--1085.


\bibitem{11}
 J. Mi\'ci\'c and J. Pe\v cari\'c, {\it Some mappings related to Levinson's inequality for Hilbert space operators}, Filomat., {\bf31}  (2017), 1995--2009.
 
\bibitem{13}
J. Mi\'ci\'c, J. Pe\v cari\'c and Y. Seo, {\it Complementary inequalities to inequalities of Jensen and Ando based on the Mond--Pe\v cari\'c method}, Linear Algebra Appl., {\bf318} (2000), 87--108.
 
\bibitem{12}
B. Mond and J. Pe\v cari\'c, {\it On Jensen's inequality for operator convex functions}, Houston J. Math., {\bf21} (1995), 739--753.
\end{thebibliography}
\end{document}